\documentclass[12pt,letterpaper]{amsart}
\usepackage[makeindex]{imakeidx}
\usepackage{ amsmath,amsthm, amsbsy,amsfonts,amssymb, txfonts}
\usepackage[normalem]{ulem}
\usepackage{stmaryrd,mathrsfs,graphicx, supertabular, amscd, tikz-cd, tikz}
\usepackage{stackrel}
\usetikzlibrary{matrix,arrows,decorations.pathmorphing}

\DeclareFontEncoding{LS1}{}{}
\DeclareFontSubstitution{LS1}{stix}{m}{n}
\DeclareSymbolFont{symbols2}{LS1}{stixfrak} {m} {n}
\DeclareMathSymbol{\operp}{\mathbin}{symbols2}{"A8}

\usepackage[active]{srcltx}
\allowdisplaybreaks
\numberwithin{equation}{section}

\usepackage[
	hypertexnames=false,
	hyperindex,
	pagebackref,
	breaklinks=true,
	bookmarks=false,
	colorlinks,
	linkcolor=blue,
	citecolor=red,
	urlcolor=red,
]{hyperref}
\usepackage{hyperref}

\def\CC{{\mathbb C}}

\def\FF{{\mathbb F}}

\def\PP{{\mathbb P}}

\def\QQ{{\mathbb Q}}

\def\ZZ{{\mathbb Z}}

\def\ssm{\smallsetminus}

\def\g{\gamma}

\def\int{{\rm int}}

\newcommand{\eps}{\varepsilon}

\newcommand{\p}{\partial}

\def\la{\langle}
\def\ra{\rangle}
\def\half{{\tfrac{1}{2}}}

\def\pt{{\scriptscriptstyle\bullet}}

\newcommand\aut{\operatorname{Aut}}

\newcommand\coker{\operatorname{coker}}

\newtheorem{theorem}{Theorem}[section]
\newtheorem{lemma}[theorem]{Lemma}

\newtheorem{corollary}[theorem]{Corollary}

\theoremstyle{remark}
\newtheorem{example}[theorem]{Example}

\newtheorem{remark}[theorem]{Remark}

\title[Level structures on hypersurfaces]{Level structures on cyclic covers of 
$\PP^n$ and the homology of Fermat hypersurfaces}
\begin{document}
\author{Eduard Looijenga}
\address{Department of Mathematics, University of Chicago (USA) and Mathematisch Instituut, Universiteit Utrecht (Nederland)}
\email{e.j.n.looijenga@uu.nl}
\maketitle

\begin{abstract}
Let  $Z'\subset \PP^{n}$ be a smooth projective hypersurface of degree $d>1$ and let $Z\to \PP^n$ be the $\mu_d$-cover totally ramified  along $Z'$. We relate full level $d$ structures on the primitive cohomology 
$Z'$  with full level $d$ structures on the primitive cohomology of $Z$. In the special case, $d=n=3$
this makes a  marking of a smooth cubic surface determine  a  level $3$-structure on the associated 
cubic threefold, thereby answering a question by Beauville \cite{beauville}. We expect many more such applications.
\end{abstract}

\section*{Introduction}

Let  $Z'\subset \PP^{n}$ be a smooth projective hypersurface of degree $d>1$ and let $Z\to \PP^n$ be the $\mu_d$-cover totally ramified  along $Z'$. Our main theorem \ref{thm:main} states that if we take the $\mu_d$-co-invariants of the  primitive  homology of $Z$ and reduce modulo $d$, then this naturally surjects onto the image of the primitive homology of $Z'$ on the primitive cohomology of $Z'$, followed by reduction modulo $d$. When $d$ is a  prime, we can also  make explicit what the kernel of this surjection is (Corollary \ref{cor:main}). The case when $Z'$ is a cubic surface of particular interest because of the way this relates to the ball quotient description of its moduli space by Allcock-Carlson-Toledo \cite{act}.
 
The proof is based on two simple maps we associate with a module $M$ for the group ring of any finite group $G$ (as defined in Section \ref{sect:group rings}). We apply this to a particular $\ZZ\mu_d$-module $M$ whose invariants  $M^{\mu_d}$  yield the primitive cohomology of $Z'$ and for which $M/M^{\mu_d}$  gives the 
 primitive homology of $Z$. We also apply this to the dual of $M$ for which the dual assertion holds.
We expect this to have similar applications to other instances of a smooth $G$-coverings relating the homology of the cover with that of the branch locus (which coefficients dividing order of the abelianization of $G$). 

We expect our main result to generalize  with other applications of interest. We justify this  in Remark \ref{rem:incarnations}, where we briefly discuss  the case of  a del Pezzo surface of degree 2 as double a cover of $\PP^2$,  a K3 surface of degree 4 as a double cover of del Pezzo surface of degree 2 and  a K3 surface of degree  6 as a  $\mu_3$-cover of  $\PP^1\times\PP^1$. These last two covers were used by Kond\=o for his ball quotient description of the moduli space of genus 3 resp. genus 4 curves. We thus find that these ball quotient descriptions subsist  we impose on a level 2 structure on a genus 3 curve or a level 3 structure on a genus 4 curve. 

Our main result also leads to a precise description of the (co)primitive cohomology of the smooth Fermat hypersurface as a module over its symmetry group (and thus correct an error in \cite{looijenga}). 
Dami\'an Gvirtz alerted me (back in March 2021) to this (which he and Skorobogatov corrected in the surface case in \cite{gvirtz}) and this was later also observed  by Nicholas Addington. 

The situation discussed here belongs to a classical theme in algebraic topology. Here the conventional  approach is  for a space $X$ on which a finite  group $G$ acts, to compare  the equivariant cohomology  $H_G^\pt(X)$  and  the cohomology of its fixed point set $H^\pt(X^G)$. It would nice to see  our main result expressed in these terms and obtained along these lines.  
\smallskip

I  am grateful to Dami\'an Gvirtz and Nicholas Addington for  pointing out the issue mentioned above. 
I am further  indebted to Nicholas  and Benson Farb for their comments on an earlier draft.

\section{Classical Lefschetz theory with integral coefficients}\label{} Let $Z\subset \PP^{n+1}$ be a smooth hypersurface of degree $d>1$. 
It is well-known that its integral homology and cohomology is torsion free (the discussion below will reprove this).
Let $\eta\in H^2(\PP^n)$ denote the hyperplane class and 
$\eta_Z\in H^2(Z)$ its restriction to $Z$. Then  $\eta_ Z^n$ takes on the fundamental class $[Z]\in H_{2n}(Z)$ the value $d$.
The \emph{primitive cohomology} of $Z$, denoted here by $H^P(Z)$,   is the cokernel of the natural map $H^{n} (\PP^{n+1})\to H^{n} (Z)$ and dually,  its \emph{primitive homology} $H_P(Z)$ as the kernel  of  
$H_{n} (Z)\to H_n(\PP^{n+1})$. Both  are  torsion free and each other's dual. The  composite 
\[
\iota_Z: H_P(Z)\hookrightarrow H_{n} (Z)\cong H^{n} (Z)\twoheadrightarrow H^P(Z),
\]
(where the middle map is given by Poincar\'e duality) defines the intersection pairing restricted to $H_P(Z)$.
All these maps are isomorphisms when $n$ is odd, whereas for $n$ is even the composite is injective with cokernel canonically isomorphic with $\ZZ/d$.
Note that when $n=0$ (in which case $Z$ is a $d$-element subset of $\PP^1$), we have identifications  
$H_P(Z)\cong \tilde H_0(Z)$ (the linear combinations of the points on $Z$ with zero coefficient sum)
and  $H^P(Z)=\tilde H^0(Z)$ (the linear combinations of the points on $Z$ modulo their sum). 

We have a perfect pairing between  $H^P(Z)$ and $H_P(Z)$, denoted 
 $(\alpha,b)\in H^P(Z)\times H_P(Z)\mapsto (\alpha|b)$. This enables us identify one with the other's dual.
The intersection pairing on $H^P(Z)$ satisfies $a\cdot b= (\iota_Z(a)|b)$.

The following lemma must be well-known.

\begin{lemma}\label{lemma:hyperplanecut}
Let $Z'\subset Z$ be  smooth hyperplane section and put $\mathring Z:=Z\ssm Z'$. Then  $\mathring Z$ is a smooth affine hypersurface which has the homotopy type of a wedge of $(d-1)^{n+1}$ $n$-spheres and we have a short exact sequences fitting in the (self-dual) commutative diagram 
\begin{center}
\begin{tikzcd}
0\rar &H^P(Z')\rar  & \tilde H_n(\mathring Z)\rar\arrow[d, "\iota_{\mathring Z}"] &  H_P(Z)\rar\arrow[d, "\iota_{Z}"]  & 0\\
0 & H_P(Z')\lar &  \tilde H^n(\mathring Z)\lar & H^P(Z)\lar & 0\lar\\
\end{tikzcd}
\end{center}
of which the vertical maps are the natural ones (defining  the intersection pairing on $\tilde H_n(\mathring Z)$). The precomposition  of  $\tilde H_n(\mathring Z)\to \tilde H^n(\mathring Z)$ with  $H^P(Z')\to \tilde H_n(\mathring Z)$  and its  postcomposition  with $\tilde H^n(\mathring Z)\to H_P(Z')$ are both zero.
\end{lemma}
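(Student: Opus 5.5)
The plan is to work with the long exact sequence of the pair $(Z,\mathring Z)$ together with Thom isomorphism (Gysin) for the normal bundle of $Z'$ in $Z$, and Lefschetz duality. Here $Z\subset\PP^{n+1}$ has dimension $n$ and $Z'=Z\cap H$ has dimension $n-1$. Since $\mathring Z=Z\ssm H$ is a smooth affine hypersurface in $\CC^{n+1}$ of degree $d$, Milnor theory gives the homotopy type; I take this as input.

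\emph{The homotopy type.} Choose coordinates so that $\mathring Z=\{f=1\}$ with $f$ the homogeneous equation restricted to $\CC^{n+1}$ (possible after rescaling since $Z'$ smooth means the top-degree part $f_d$ defines a smooth projective hypersurface). Then $\mathring Z$ is the Milnor fiber of the weighted homogeneous isolated singularity $f_d$, hence a bouquet of $\mu=(d-1)^{n+1}$ spheres of dimension $n$ (Milnor). In particular $\tilde H_\pt(\mathring Z)$ is free and concentrated in degree $n$, which also yields the torsion freeness claims by induction on $n$.

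\emph{The exact sequences.} Write $U$ for a tubular neighborhood of $Z'$, with $\partial U\to Z'$ a circle bundle. By excision and Thom isomorphism, $H_k(Z,\mathring Z)\cong H_{k-2}(Z')$. The homology sequence
\[
H_{n+1}(Z)\to H_{n-1}(Z')\to H_n(\mathring Z)\to H_n(Z)\to H_{n-2}(Z')
\]
has boundary map $H_{n-1}(Z')\to H_n(\mathring Z)$ given by the tube (circle-bundle) map, and $H_n(Z)\to H_{n-2}(Z')$ given by intersection with $Z'$. I then compare with the same sequence for $(\PP^{n+1},\PP^{n+1}\ssm H)$, where $\PP^{n+1}\ssm H$ is contractible. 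The kernel of $H_n(Z)\to H_{n-2}(Z')\to H_{n-2}(\PP^n)$ is exactly $H_P(Z)$, since the Gysin map to $H_{n-2}(\PP^n)$ is compatible with $H_n(Z)\to H_n(\PP^{n+1})$. Similarly, the cokernel of $H_{n+1}(Z)\to H_{n-1}(Z')$ is $H_{n-1}(Z')$ modulo the image of $H^{n-1}(\PP^{n})$ (hyperplane powers), which is $H^P(Z')$ by Poincaré duality on $Z'$. Passing to reduced homology in the case $n=0$ is handled the same way. This identifies the top row, with surjectivity onto $H_P(Z)$ following because $H_n(\mathring Z)\to H_n(Z)$ lands in $H_P(Z)$ (as $\mathring Z$ maps into an affine space) and the previous kernel is $H_P(Z)$. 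The bottom row is then the dual, using that everything is free and Lefschetz duality $\tilde H^n(\mathring Z)\cong H_n(Z,Z')$-type identifications. Its self-duality comes from the universal coefficient theorem.

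\emph{Commutativity and vanishing.} The vertical map $\iota_{\mathring Z}$ is $H_n(\mathring Z)\to H_n(\mathring Z,\partial)\cong H^n(\mathring Z)$, computed with $\mathring Z$ deformation-retracted to the complement of $U$. Commutativity of the right square holds because $\iota_Z$ is induced by the inclusion $Z\ssm U\subset Z$ and Poincaré duality is natural for such open inclusions. For the vanishing, a tube class $\tau(a)$ over $a\in H_{n-1}(Z')$ lies in $\partial U$, so its image in $H_n(\mathring Z,\partial)$ is zero. This gives the precomposition statement, and the postcomposition statement is its dual. I expect the main obstacle to be the careful bookkeeping that the Gysin/tube maps match the ambient projective ones, so that the quotients and kernels are exactly primitive (co)homology integrally rather than only rationally. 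I would handle this via the compatible map of pairs into $(\PP^{n+1},\PP^{n+1}\ssm H)$ and the fact that $\eta_{Z'}$-powers generate the image of $H^\pt(\PP^n)$.
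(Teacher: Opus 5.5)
Your proposal is correct in outline. It is the paper's proof run in the dual direction. The paper writes the cohomological Gysin sequence $0\to H^{n-2}(Z')\to H^n(Z)\to H^n(\mathring Z)\to H^{n-1}(Z')\to H^{n+1}(Z)\to 0$, identifies its ends, calls the homological row ``similar'', and gets the vanishing from the tube map $L$ and its dual, the residue $R$. You instead write the homology sequence of $(Z,\mathring Z)$ with the Thom isomorphism (the $\ZZ$-dual of that sequence) and obtain the cohomological row by universal coefficients. Your arguments for the right-hand square (naturality of Poincar\'e--Lefschetz duality for $Z\ssm U\subset Z$) and for the vanishing (a tube class is supported on $\partial U$, so it dies in $H_n(Z\ssm U,\partial U)$) are correct and more explicit than the paper's.

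Several justifications need repair, however.

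First, you cannot choose coordinates so that $\mathring Z=\{f_d=1\}$. The affine equation is $f_d+f_{d-1}+\cdots+f_0=0$, and no linear change or rescaling removes the lower-order terms: for $n=1$, $d=3$, the closure of $\{f_3=1\}$ is a cyclic triple cover of $\PP^1$, hence has $j$-invariant $0$. What is true is this. With $f_d$ fixed, the tuples $(f_{d-1},\dots,f_0)$ for which the projective closure is smooth form a nonempty Zariski-open, hence connected, subset of an affine space, and along it the section at infinity stays equal to $Z'$. Ehresmann's theorem for the pair then gives $\mathring Z\cong\{f_d=1\}$, to which Milnor applies. The paper's ``Milnor fiber of the affine cone over $Z'$'' tacitly uses the same reduction.

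Second, identifying the two ends rests on the Lefschetz hyperplane theorem; the comparison with $(\PP^{n+1},\PP^{n+1}\ssm H)$ does not replace it.
On the right, exactness says the image of $\tilde H_n(\mathring Z)$ is $\ker(H_n(Z)\to H_{n-2}(Z'))$. ``It lands in $H_P(Z)$'' gives only one inclusion; the other needs $H_{n-2}(Z')\to H_{n-2}(\PP^n)$ to be injective (Lefschetz for $Z'$).
On the left, the ambient comparison only shows that $H_{n+1}(Z)$ has image $d\ZZ$ in $H_{n-1}(\PP^n)$ (for $n$ odd). That determines the image of a generator in $H_{n-1}(Z')$ only modulo $H_P(Z')$. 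What pins it down is that, under Poincar\'e duality, $H_{n+1}(Z)\to H_{n-1}(Z')$ becomes restriction $H^{n-1}(Z)\to H^{n-1}(Z')$, and $H^{n-1}(Z)$ is $0$ for $n$ even and $\ZZ\eta_Z^{(n-1)/2}$ for $n$ odd.

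Third, your dualization step needs $H_n(Z)$ to be torsion free. This should come from Lefschetz plus duality: the torsion of $H_n(Z)$ is that of $H^n(Z)$, hence that of $H_{n-1}(Z)\cong H_{n-1}(\PP^{n+1})$, which is zero. The induction you mention is not carried out, and the exact sequence alone cannot show that the image of the tube map is saturated in $\tilde H_n(\mathring Z)$.

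With these inputs, which are the ones the paper uses, your argument goes through.
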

\begin{proof}
Since $\mathring Z$ is a Milnor fiber  of the affine  cone over $Z'$, its homotopy type is that of a wedge of $(d-1)^{n+1}$ $n$-spheres whose number is equal to the Milnor number, which is $(d-1)^{n+1}$ (\cite{milnor}, Thm.\ 6.5).

We  next establish the lower exact sequence. 
When $n=0$, this boils down the observation we just made, namely that $H^P(Z)\cong \tilde H^0(\mathring Z)$.
We therefore assume $n>0$. Then  
$\tilde H^\pt(\mathring Z)$ is concentrated in degree $n$ and hence the Gysin sequence of the pair $(Z,Z')$ gives the exact sequence
\[
0\to H^{n-2}(Z')\to H^n(Z)\to H^n(\mathring Z)\to H^{n-1}(Z')\to  H^{n+1}(Z)\to 0.
\]

If $n=2m$ is even with $m>0$, then $H^{n+1}(Z)=0$. Furthermore, $H^{n-2}(Z')$ is freely generated by $\eta_{Z'}^{m-1}$ and  maps to $\eta_Z^{m}\in  H^n(Z)$.

If $n=2m-1$ is odd, then $H^{n-2}(Z')=0$  and  $H^{n+1}(Z)$ is freely generated by $\eta_Z^m$ and again the assertion follows.

The proof of the upper exact sequence  is similar. The last assertion is seen geometrically: it is induced by the map
\[
H_{n-1}(Z')\xrightarrow{L}H_n(\mathring Z)\to H^n(\mathring Z)\xrightarrow{R} H^{n-1}(Z'), 
\]
where $L$ is   the Lefschetz tube mapping: it assigns to an $(n-1)$-cycle on $Z'$ its preimage in a tubular neighbourhood boundary of $Z'$ in $Z$ (which can indeed be understood as an $n$-cycle) and $R$ 
the residue map (which is dual to $L$). The composite of any of these with the middle map  is zero. Indeed, the above sequence is self-dual.
\end{proof}

\begin{remark}\label{rem:vanlattice}
The group $H_P(Z')$ is in fact a \emph{vanishing lattice}. We recall what this means and how this comes about.
Let  $\PP_{n,d}$  stand for the complete linear system of degree $d$  hypersurfaces in $\PP^n$. The singular 
hypersurfaces are parametrized by the  \emph{discriminant hypersurface} $D_{n,d}\subset \PP_{n,d}$, which is irreducible and  whose smooth points define hypersurfaces with a single ordinary double point and no other singularities. We put $ \mathring\PP_{n,d}:=\PP_{n,d}\ssm D_{n,d}$. 
A path $\g: [0,1]\to \PP_{n,d}$ with $\g(0)=[Z']$ and which hits $D_{n,d}$ at its end point for the first time  and does so in a smooth point $[Z'_1]$ of   $D_{n,d}$,  determines  a \emph{vanishing cycle}  $\delta_\g\in H_P(Z')$. The path $\g$ also determines up to homotopy  a loop in $\mathring\PP_{n,d}$ based at $[Z']$  circling around $[Z'_1]$ and its 
monodromy action on $H_P(Z')$ is expressed in terms $\delta_\g$ by the Picard-Lefschetz formula 
\[
T(\delta_\g): a\in H_P(Z')\mapsto a +(-1)^{n(n-1)/2} (a\cdot \delta_\g)\delta_\g.
\]
In case $n=2m+1$ is odd, the intersection pairing on $H_P(Z')$ is symmetric,  
$\delta_\g\cdot\delta_g=(-1)^m 2$  and hence  $T(\delta_\g)$ is an orthogonal reflection. For even $n$ it is a symplectic transvection.
The  loops thus obtained  make up a single generating  conjugacy class  in $\pi_1(\mathring\PP_{n,d}, [Z'])$ and the classical Lefschetz theory asserts that the set $\Delta_{Z'}\subset H_P(Z')$ of the associated  vanishing cycles generates $H_P(Z')$. So the 
associated Picard-Lefschetz transformations $T(\delta)$ make up a generating conjugacy class of  the image of the monodromy representation $\rho_{Z'}: \pi_1(\mathring\PP_{n,d}, [Z'])\to \aut H_P(Z')$ and this image has $\Delta_{Z'}$ as an  orbit.  The preceding also implies  that for odd $n$,  the intersection pairing on $H_P(Z')$ is  even  and hence lifts to quadratic form  $H_P(Z')\to \ZZ$ which  takes the value  $(-1)^{(n-1)(n-2)/2}$ on the vanishing cycles. For even $n$, the preceding still shows that the  reduction modulo 2 of the intersection pairing  lifts to  quadratic form $H_P(Z'; \FF_2)\to \FF_2$ which takes the value $1$ on the  images of the vanishing cycles.
\end{remark}

It is clear that primitive homology and primitive cohomology have the same rank. Lemma \ref{lemma:hyperplanecut} implies  the (known) formula for that common rank:

\begin{corollary}\label{cor:}
The rank $p_n(d)$ of the (co)primitive cohomology of a smooth  degree $d$ hypersurface  in $\PP^{n+1}$ is
\[
(d-1)\frac{(d-1)^{n+1}+(-1)^n}{d}. 
\]
\end{corollary}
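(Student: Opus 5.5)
Let $Z\subset\PP^{n+1}$ be a smooth degree $d$ hypersurface and $Z'\subset Z$ a smooth hyperplane section, so that $Z'\subset\PP^n$ is a smooth degree $d$ hypersurface of dimension $n-1$. The plan is to read off a recursion for $p_n(d)$ from the upper short exact sequence of Lemma \ref{lemma:hyperplanecut},
\[
0\to H^P(Z')\to \tilde H_n(\mathring Z)\to H_P(Z)\to 0,
\]
and then solve it by induction on $n$. All groups in this sequence are free abelian. Lemma \ref{lemma:hyperplanecut} also gives that $\mathring Z$ has the homotopy type of a wedge of $(d-1)^{n+1}$ $n$-spheres, so $\tilde H_n(\mathring Z)$ has rank $(d-1)^{n+1}$. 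Primitive homology and primitive cohomology have the same rank, so taking ranks gives
\[
p_n(d)+p_{n-1}(d)=(d-1)^{n+1}\qquad (n\ge 1).
\]
Here $p_{n-1}(d)$ is the rank for the degree $d$ hypersurface $Z'\subset\PP^n$. This is consistent with the indexing in the statement, since $p_n(d)$ refers to hypersurfaces in $\PP^{n+1}$.

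The base case is $n=0$. Then $Z$ is a set of $d$ points in $\PP^1$, and $H_P(Z)\cong\tilde H_0(Z)$ has rank $d-1$. The formula gives $(d-1)\frac{(d-1)+1}{d}=d-1$, so it holds.

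For the inductive step, assume $p_{n-1}(d)=(d-1)\frac{(d-1)^{n}+(-1)^{n-1}}{d}$. The recursion then gives
\[
p_n(d)=(d-1)^{n+1}-(d-1)\frac{(d-1)^n-(-1)^n}{d}=(d-1)\frac{d(d-1)^n-(d-1)^n+(-1)^n}{d}=(d-1)\frac{(d-1)^{n+1}+(-1)^n}{d},
\]
which is the claimed formula.

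The only point needing care is the case $n=1$, where $Z'$ is a set of points. There $H^P(Z')=\tilde H^0(Z')$, and Lemma \ref{lemma:hyperplanecut} covers it with this convention, so the recursion holds for all $n\ge1$. There is no real obstacle: the substance (the Milnor number count and the exact sequence) is already in Lemma \ref{lemma:hyperplanecut}.
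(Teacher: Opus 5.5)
Your proof is correct and follows the same route as the paper: the base case $p_0(d)=d-1$, then induction on the recurrence read off from ranks in the upper exact sequence of Lemma \ref{lemma:hyperplanecut}, using the Milnor number $(d-1)^{n+1}$ for the rank of $\tilde H_n(\mathring Z)$. Your recurrence $p_n(d)=(d-1)^{n+1}-p_{n-1}(d)$ has the right sign, whereas the paper prints $p_n(d)=(d-1)^{n+1}+p_{n-1}(d)$, which is a typo. The case $n=1$ confirms this: $(d-1)^2-(d-1)=(d-1)(d-2)$, which is the first Betti number of a smooth plane curve of degree $d$.
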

\begin{proof} First observe that $p_0(d)=d-1$. The corollary then follows with induction, for  Lemma \ref{lemma:hyperplanecut} gives the recurrence  relation $p_n(d)=(d-1)^{n+1}+p_{n-1}(d)$.
\end{proof}

Note the identity 
\begin{equation}\label{eqn:funny}
p_n(d)=(d-1)\big(p_{n-1}(d)+(-1)^n\big).
\end{equation}

The intersection pairing 
$(\alpha, \beta)\in H^{n}(Z)\times H^{n}(Z)\to \alpha\cdot\beta:=\la \alpha \cup\beta | [Z]\ra\in \ZZ$
is perfect by Poincar\'e duality.  It is symmetric or antisymmetric  according to whether $n$ is even or odd. 

Suppose  $n=2m$ is even. Then  $\eta^m_Z\in H^n(Z)$ is indivisible and
has self-intersection $d$. Since we may identify $H_P(Z)$ with  the orthogonal complement of 
$\eta^m_Z$, it follows that  $H_P(Z)\to H^P(Z)$ is an injection with cokernel  $\ZZ/d$. We regard this injection as an embedding. So then $H_P(Z)$ contains $d H^P(Z)$ and $H_P(Z)/d H^P(Z)$
is a  free $\ZZ/d$-submodule of $H^P(Z; \ZZ/d)$ of corank one.

\begin{corollary}\label{cor:complex}
Let $Z=Z^0\supset Z^1\supset\cdots Z^n\supset Z^{n+1}=\emptyset$ be the transversal intersection of $Z$ with a complete flag in $\PP^{n+1}$,  yielding the affine stratification with codimension $k$ stratum
$\mathring Z^k:= Z^k\ssm Z^{k+1}$. Then the  maps 
\[
D_k: \tilde H_{n-k}( \mathring Z^{k})\twoheadrightarrow H_P(Z^{k})\xrightarrow{\iota_{Z^k}} H^P(Z^{k})\hookrightarrow
\tilde H_{n-k-1}( \mathring Z^{1+k})
\]
define a (homological) complex  which naturally maps to the single term complex  $H_P(Z)$ placed in degree zero. The kernel of this chain map has its homology in degrees $0, -1, \dots , 1-n$ and is in these degrees $\ZZ/d$. \hfill $\square$
\end{corollary}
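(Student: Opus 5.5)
The plan is to apply Lemma~\ref{lemma:hyperplanecut} to each consecutive pair $Z^{k+1}\subset Z^k$. Here $Z^k$ is a smooth degree $d$ hypersurface of dimension $n-k$ in a $\PP^{n+1-k}$, and $Z^{k+1}$ is a smooth hyperplane section of it. The upper exact sequence of the lemma for this pair is
\[
0\to H^P(Z^{k+1})\to \tilde H_{n-k}(\mathring Z^k)\to H_P(Z^k)\to 0 .
\]
I read the differential as the composite
\[
\tilde H_{n-k-1}(\mathring Z^{k+1})\twoheadrightarrow H_P(Z^{k+1})\xrightarrow{\iota_{Z^{k+1}}} H^P(Z^{k+1})\hookrightarrow \tilde H_{n-k}(\mathring Z^k).
\]
This is the only reading compatible with the lemma: the indices in the statement have to be shifted by one, and I place $\tilde H_{n-k}(\mathring Z^k)$ in degree $k$, up to the sign convention of the statement. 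The augmentation to the one-term complex $H_P(Z)$ is the surjection $\tilde H_n(\mathring Z)\twoheadrightarrow H_P(Z)$.

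\textbf{Step 1 (complex and chain map).} Two consecutive differentials contain the segment $H^P(Z^{k+1})\hookrightarrow\tilde H_{n-k}(\mathring Z^k)\twoheadrightarrow H_P(Z^k)$. This is zero by exactness of the upper sequence, which is also the last assertion of the lemma. The same argument shows that the augmentation composed with the first differential vanishes, so the augmentation is a chain map.

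\textbf{Step 2 (homology of the kernel).} The augmentation is onto, so its kernel $K$ agrees with the complex in positive degrees, and in degree $0$ it is replaced by $\ker(\tilde H_n(\mathring Z)\to H_P(Z))=H^P(Z^1)$. In each degree the outgoing differential is a surjection, followed by the injection $\iota$, followed by an embedding. Its kernel is therefore exactly $\ker(\tilde H_{n-k}(\mathring Z^k)\to H_P(Z^k))=H^P(Z^{k+1})$. The image of the incoming differential is $\iota_{Z^{k+1}}(H_P(Z^{k+1}))\subset H^P(Z^{k+1})$. Hence the homology in degree $k$ is $\coker\,\iota_{Z^{k+1}}$, for $0\le k\le n-1$. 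At $k=n$ the space $\mathring Z^n$ is $d$ points, $Z^{n+1}=\emptyset$, and the term $\tilde H_0(\mathring Z^n)=H_P(Z^n)$ injects, so there is no homology there.

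\textbf{Step 3 (identification).} By the discussion preceding the corollary, $\coker\,\iota_{Z^{k+1}}$ is canonically $\ZZ/d$ when $\dim Z^{k+1}=n-k-1$ is even, and $0$ when it is odd. This gives $n$ candidate degrees, matching the range $0,-1,\dots,1-n$ in the statement after regrading.

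\textbf{Main obstacle.} The point I expect to need care is this last step. As computed, the $\ZZ/d$'s appear only in the degrees where $n-k-1$ is even, because $\iota$ is an isomorphism in odd dimensions. So I would recheck whether the statement intends every degree or only alternate ones. The first thing I would do is recompute the case $n=1$ directly, with $Z$ a plane curve, $Z^1$ its $d$ points and $Z^2=\emptyset$. If the discrepancy persists, the correct conclusion is $\ZZ/d$ in the degrees with $n-k-1$ even, and $0$ otherwise.
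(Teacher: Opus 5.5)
Your argument is correct and is the one the paper leaves implicit: the corollary is stated without proof, as a consequence of Lemma \ref{lemma:hyperplanecut} and the parity statement for $\coker\iota$. Exactness of the upper sequences gives $D\circ D=0$ and the chain map, and the homology of the kernel in degree $k$ is $\coker\iota_{Z^{k+1}}$ for $0\le k\le n-1$, with nothing at $k=n$.

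Your suspicion about the last sentence is justified. With your index correction (the target of $D_k$ must be $\tilde H_{n-k+1}(\mathring Z^{k-1})$), $\ZZ/d$ occurs only in the $\lceil n/2\rceil$ degrees where $\dim Z^{k+1}=n-k-1$ is even, so the printed claim fails for $n\ge 2$. Your proposed test case $n=1$ cannot detect this, since there is only one degree. The case $n=2$ does: $H_0=\coker\iota_{Z^1}=0$ because $Z^1$ is a curve, while $H_1=\coker\iota_{Z^2}=\ZZ/d$.
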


\section{A pair of  maps associated with a module over a group ring}\label{sect:group rings} 

We shall apply what follows to a finite  cyclic group, but since  the discussion initially does not require that assumption, 
we let $G$ be any group of finite order $d$. The  group ring $\ZZ G$  comes with an
\emph{augmentation} resp.\ \emph{co-augmentation}
\begin{gather*}
\eps_G: \ZZ G\to \ZZ, \quad  \eps_G(g)=1 \text{  for all $g\in G$},\\
\textstyle \eps^G:  \ZZ\to \ZZ G,  \quad \eps^G(1)=\sum_{g\in G} g.
\end{gather*}
These are $\ZZ G$-linear if we regard $\ZZ$ as the trivial $\ZZ G$-module (we denoted  the  image  of $g\in G$ in $\ZZ G$ also by  $g$).  They form respectively the invariants resp.\ co-invariants of $\ZZ G$ as a (left) module over itself.
Then $\ker(\eps_G)$ and the image of $\eps^G$ are each others annihilator and   $\ker(\eps_G)\cong \coker(\eps^G)$ as $\ZZ G$-modules. Note that $\eps_G\eps^G: \ZZ\to \ZZ$  is multiplication with $d$.
We shall write   $I$ for $\ker(\eps_G)$ (the \emph{augmentation ideal}).

Let $M$ be a $\ZZ G$-module. The map
\[
\textstyle \eps^G_M: M\to M^G, m\mapsto \eps^G(1)m=\sum_{g\in G} gm
\]
has the property that  $\eps^G_M (gm)= \eps^G_M(m)$ and is on $M^G$ multiplication by $d$
Hence it induces a $\ZZ/d$-linear map
\begin{equation}\label{eqn:natural}
r_M:  (M/M^G)_G\to \ZZ/d\otimes_\ZZ M^G.
\end{equation}

For example,   $r_{\ZZ G}$ is an isomorphism between two copies of $\ZZ/d$ and 
$r_\ZZ$ is the inclusion of $0$ in $\ZZ/d$.
If $M=I$, then $I^G=0$ and so the target of $r_{I}$ is zero  and its domain is  
$(I)_G\cong I/I^2$. As is well-known, $I/I^2$ can be identified with (the additively written) abelianization $G_{ab}$ of $G$.

There is also a dual version based on the isomorphism $M/IM\cong M_G$:  the natural map 
$I\otimes_\ZZ M\to I M\to (I M)_G$ factors through 
$I/I^2\otimes_\ZZ M_G$ and thus defines a map
\[
s_M: G_{ab}\otimes_\ZZ M_G\to (I M)_G.
\]

\section{Cyclic covers }\label{sect:cyclic covers} 
In what follows we are concerned with the group $\mu_d\subset \CC^\times$ of $d$th roots of unity. We shall write
$\eps_d: \ZZ\mu_d\to \ZZ$ and $\eps^d:\ZZ\to \ZZ\mu_d$ for augmentation and co-augmentation.  
 If $g$ is a generator of $\mu_d$, then $g-1$ is a generator of $I_d$  and  
$\eps^d(1)=\sum_{g\in \ZZ/d} g^k$. We shall also write $R_d$ for $\ZZ\mu_d$ and $A_d$ for the `co-augmentation ring' $\ZZ\mu_d/\eps^{\mu_d}(\ZZ)$. Note that $I_d$ is a principal $A_d$-module.
Both are free $\ZZ$-modules of rank $d-1$.

\smallskip
Let $Z'\subset \PP^n$ be a smooth hypersurface of degree $d$ and let  $F\in \CC[z_0, \dots, z_n]$ be a defining equation (so homogeneous of degree $d$). Then $F+z_{n+1}^d$ defines  a smooth degree $d$ hypersurface $Z\subset \PP^{n+1}$ which comes with an action by $\mu_d$ defined by
\begin{multline*}
\zeta\in \mu_d: [z_0:\cdots :z_n:z_{n+1}]\mapsto [z_0:\cdots :z_n: \zeta z_{n+1}]=\\=
 [\zeta^{-1}z_0:\cdots :\zeta^{-1}z_n: z_{n+1}].
\end{multline*}
(It was recently shown by Zhiyuan Li and Zhichao Tang \cite{li_tang} that $Z$  determines $Z'$ up to projective equivalence.)

The $\mu_d$-orbit space of $Z$ can be identified with $\PP^n$ and the resulting map $\pi: Z\to \PP^n$ 
takes the  $\mu_d$-fixed point set in $Z$ isomorphically onto $Z'$. We therefore regard 
$Z'$ as a subvariety of $Z$,  given as the (transversal) hyperplane section defined by $z_{n+1}=0$. 
The diagram of Lemma \ref{lemma:hyperplanecut} is obviously one of $\ZZ\mu_d$-modules.
The intersection pairing on $H_P(Z)$ is $\mu_d$-invariant. This determines a sesquilinear
pairing 
\[
\textstyle \la a,b \ra:=\sum_{g\in\mu_d} (a\cdot g b)g\in R_d,
\]
which  is $R_d$-linear in the first variable. This defines in fact  a $(-)^n$-hermitian form on the $R_d$-module $H_P(Z)$, for if we denote the natural anti-involution of $R_d$  which takes $g$ to $g^{-1}$ by a  bar, then $\overline{\la a, b \ra}=(-1)^n\la b, a \ra$. 

\begin{lemma}\label{lemma:rX}
In the  upper exact sequence of Lemma \ref{lemma:hyperplanecut}, 
the trivial $\ZZ\mu_d$-module  $H^P(Z')$  maps isomorphically onto
$\tilde H_n(\mathring Z)^{\mu_d}$. Dually, in its lower exact sequence, $\tilde H^n(\mathring Z)_{\mu_d}$ maps 
isomorphically to $H_P(Z')$.
\end{lemma}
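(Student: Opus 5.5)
The plan is to treat the upper sequence of Lemma \ref{lemma:hyperplanecut} by taking $\mu_d$-invariants, and to handle the dual statement by a Wang-sequence argument.

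\emph{Invariants of the upper sequence.} Since $\mu_d$ fixes $Z'$ pointwise, $H^P(Z')$ is a trivial $\ZZ\mu_d$-module, free of finite rank. Taking $\mu_d$-invariants of the upper short exact sequence gives the exact sequence
\[
0\to H^P(Z')\to \tilde H_n(\mathring Z)^{\mu_d}\to H_P(Z)^{\mu_d}\to H^1(\mu_d; H^P(Z')).
\]
It therefore suffices to prove that $H_P(Z)^{\mu_d}=0$. (The last term vanishes anyway, since it equals $\mathrm{Hom}(\mu_d,\ZZ^r)=0$.)

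\emph{Vanishing of $H_P(Z)^{\mu_d}$.} Because $H_P(Z)$ is torsion free, it is enough to check this rationally. By transfer, $H_n(Z;\QQ)^{\mu_d}\cong H_n(Z/\mu_d;\QQ)=H_n(\PP^n;\QQ)$. This is zero for $n$ odd. For $n=2m$ it is spanned by the Poincar\'e dual of $\eta_Z^m$ (equivalently, by $[Z\cap L]$ for a general linear $L$ of codimension $m$). That class maps to $d$ times a generator of $H_n(\PP^{n+1})$, so it is not primitive. Hence $H_P(Z;\QQ)^{\mu_d}=0$, and the first assertion follows.

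\emph{Coinvariants of the lower sequence.} Taking coinvariants of the lower sequence is right exact, so
\[
H^P(Z)_{\mu_d}\to \tilde H^n(\mathring Z)_{\mu_d}\to H_P(Z')\to 0
\]
is exact. Dualizing the previous paragraph shows $H^P(Z)_{\mu_d}\otimes\QQ=0$. The kernel of $\tilde H^n(\mathring Z)_{\mu_d}\to H_P(Z')$ is the image of this finite group, so it is torsion. The whole point, and the step I expect to be the main obstacle, is therefore to show that $\tilde H^n(\mathring Z)_{\mu_d}$ is torsion free. Once that is known, the map is injective, hence an isomorphism. Note that this cannot simply be read off by duality from the invariants statement, since $\mathrm{Hom}(M_G,\ZZ)=(M^*)^G$ loses torsion.

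\emph{Torsion-freeness via the Wang sequence.} To prove it, I would identify $\mathring Z$ with the affine Milnor fibre $\{F=-1\}\subset\CC^{n+1}$ of the global fibration
\[
F:U:=\CC^{n+1}\ssm\{F=0\}\to\CC^\times.
\]
Its geometric monodromy is multiplication by a primitive $d$th root of unity, so it is a generator $g$ of the $\mu_d$-action. Since $\tilde H^\pt(\mathring Z)$ is concentrated in degree $n$, the Wang sequence yields an injection
\[
\tilde H^n(\mathring Z)_{\mu_d}=\coker(g-1)\hookrightarrow H^{n+1}(U).
\]
So it suffices to show that $H^{n+1}(U)$ is torsion free. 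Now $U$ is a $\CC^\times$-bundle over the affine $n$-dimensional variety $B:=\PP^n\ssm Z'$, and $H^{n+1}(B)=0$. The Gysin sequence then gives a surjection $H^n(B)\twoheadrightarrow H^{n+1}(U)$, with kernel the image of $H^{n-1}(B)$ under multiplication by the Euler class. I would verify torsion-freeness of the cokernel by computing $H^\pt(B)$ via Lefschetz duality with $H_\pt(\PP^n,Z')$, using the torsion-freeness of $H_\pt(Z')$. Alternatively, the same Wang/Gysin bookkeeping could be run in homology, combined with the universal coefficient theorem.
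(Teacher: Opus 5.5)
You handle the invariants exactly as the paper does: $H_P(Z)$ is torsion free and $H_P(Z;\QQ)$ has no trivial character. You are also right that the coinvariant statement does not follow by formal duality; the paper only writes ``dually'' there. The genuine gap is your step ``it suffices to show that $H^{n+1}(U)$ is torsion free''. For even $n$ this is false, so the plan cannot be completed.

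Run your own computation:
\begin{itemize}
\item Since $H^{n+1}(\mathring Z)=0$, the Wang injection $\tilde H^n(\mathring Z)_{\mu_d}\hookrightarrow H^{n+1}(U)$ is an isomorphism, so nothing is lost in that reduction.
\item Since $H^{n+1}(B)=H^{n+2}(B)=0$, the Gysin sequence gives $H^{n+1}(U)\cong H^n(B)$ directly. Your version, with a kernel coming from $H^{n-1}(B)$, has the indices mixed up: cup product with the Euler class sends $H^{n-1}(B)$ to $H^{n+1}(B)=0$.
\item Lefschetz duality gives $H^n(B)\cong H_n(\PP^n,Z')$. Torsion-freeness of $H_\pt(Z')$ is not what matters here; torsion comes from the cokernel of $H_n(Z')\to H_n(\PP^n)$.
\end{itemize}
For $n=2m\ge 2$, $H_n(Z')\cong\ZZ$ is generated by the class of an $m$-dimensional linear section of $Z'$. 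That class maps to $d$ times a generator of $H_n(\PP^n)$, and $H_{n-1}(\PP^n)=0$. Hence
\[
0\to \ZZ/d\to H_n(\PP^n,Z')\to H_P(Z')\to 0,\qquad\text{so}\qquad \tilde H^n(\mathring Z)_{\mu_d}\cong \ZZ/d\oplus H_P(Z'),
\]
which cannot map isomorphically onto the free module $H_P(Z')$.

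So for even $n$ the second assertion of the lemma is itself false, and no argument can close this gap. The smallest example is $d=2$, $n=2$. Here $Z'$ is a conic and $\mathring Z$ is the affine quadric $\simeq S^2$, with $\mu_2$ acting antipodally (degree $-1$). Thus $\tilde H^2(\mathring Z)_{\mu_2}\cong\ZZ/2$, while $H_P(Z')=H_1(Z')=0$.

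The same answer comes out algebraically. For a lattice $M$, the torsion of $M_{\mu_d}$ is $\hat H^{-1}(\mu_d;M)$. Here $\tilde H^n(\mathring Z)\cong A_d^{\otimes(n+1)}$, the dual of Pham's $I_d^{\otimes(n+1)}$. Dimension shifting along $0\to\ZZ\to\ZZ\mu_d\to A_d\to 0$ gives $\hat H^{-1}(\mu_d;A_d^{\otimes(n+1)})\cong\hat H^{n}(\mu_d;\ZZ)$, which is $\ZZ/d$ for $n$ even and $0$ for $n$ odd. Equivalently, the kernel of $\tilde H^n(\mathring Z)_{\mu_d}\to H_P(Z')$ is the cokernel of the connecting map $\ZZ/d\otimes H_P(Z')\to H^P(Z)_{\mu_d}$. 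Geometrically this is the paper's $s_{Z'}$, and Theorem \ref{thm:main} only asserts that $s_{Z'}$ is onto for odd $n$.

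For odd $n$, by contrast, your route is correct and complete, and it supplies what ``dually'' omits. There $H_n(\PP^n)=0$ and $H_{n-1}(Z')\to H_{n-1}(\PP^n)$ has kernel $H_P(Z')$. So $\tilde H^n(\mathring Z)_{\mu_d}\cong H^{n+1}(U)\cong H_n(\PP^n,Z')\cong H_P(Z')$ is torsion free, and your argument yields the isomorphism.
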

\begin{proof} 
Since $H_P(Z)$ is torsion free, it suffices to show that the trivial character of $\mu_d$ will not appear in 
$H_P(Z; \QQ)$. This follows from the fact that the invariant rational cohomology of $Z$ must come form its 
space of $\mu_d$-invariants, which is $\PP^n$, and the fact that the image of 
$H^n(\PP^n; \QQ)\to H^n(Z; \QQ)$ is supplemented by $\QQ\otimes H_P(Z)$.
\end{proof}

\begin{remark}\label{rem:}
We digress a bit to explicate the monodromy representation.

The $\mu_d$-covering construction cannot  be carried out universally over $\PP_{n,d}$ (this due to the fact that such a covering is only unique up to a covering transformation), but what we can do is to consider
the subspace  of  $\PP_{n+1,d}$ parametrizing  hypersurfaces of the form $F(z_0,\dots , z_n)+z_{n+1}^d$ with $F\not=0$. This amounts to a base change over the (tautological)  $\CC^\times$-bundle over  $\PP_{n,d}$. 
The fundamental group of its restriction to $\mathring\PP_{n,d}$ is a central extension 
 of $\pi_1(\mathring\PP_{n,d}, [Z'])$ by $\pi_1(\CC^\times, 1)\cong\ZZ$.  
Here $\pi_1(\CC^\times, 1)$ acts by covering transformations and gives a surjection $\pi_1(\CC^\times, 1)\to \mu_d$ (and thus determines a generator $g_0$ of $\mu_d$). So if $\hat\pi_1(\mathring\PP_{n,d}, [Z'])$ is  the corresponding central extension  of $\pi_1(\mathring\PP_{n,d}, [Z'])$ by $\mu_d$ (a push-out), then the 
monodromy representation   now factors through a homomorphism 
\[
\hat\rho_{Z'}:\hat\pi_1(\mathring\PP_{n,d}, [Z'])\to \aut H_P(Z).
\]
with the central $\mu_d\subset \hat\pi_1(\mathring\PP_{n,d}, [Z'])$ acting in the given manner.
This group acts in the contragradient manner on $H^P(Z)$  and  $\iota_Z: H_P(Z)\to H^P(Z)$ is  equivariant.
 Its image of this representation is  generated by `generalized Picard-Lefschetz transformations' in the sense of Pham:
a path $\g$ as above determines a degeneration into a $\mu_d$-cover $Z_1\to \PP^n$ totally ramifying along $Z'_1$. In terms of singularity theory, this gives us vanishing sublattice  $V_\g\subset H_P(Z)$ of type $A_{d-1}$.  Pham specifies  a vanishing cycle $\hat\delta_\g\in H_P(Z)$ (denoted $e$ in \S 2 of \cite{pham}), which generates $V_\g$ as a free $A_d$-module of rank one and shows that the  associated monodromy  is given by the transformation in $H_P(Z)$ defined by 
\[
\textstyle \hat T(\delta_{\g}): a\mapsto a + (-1)^{(n+1)n/2} \la \hat a,\delta_{\g}\ra \hat\delta_{\g}=a + (-1)^{(n+1)n/2} \sum_{g\in \mu_d}  (a, g\hat\delta_{\g}) g\hat\delta_{\g} 
\]
This transformation respects the form $\la\; ,\; \ra$; this makes it  for even $n$ a unitary reflection of order $d$. 
\end{remark}

Lemma \ref {lemma:rX} enables us to apply the discussion in Section \ref{sect:group rings} to $M=\tilde H_n(\mathring Z)$.  We find that multiplication with $\eps^d(1)=\sum_{g\in \mu_d} g$ in the $\ZZ\mu_d$-module $\tilde H_n(\mathring Z)$ determines a linear map 
\[
r_{Z'}:  H_P(Z)_{\mu_d}\to  \ZZ/d\otimes  H^P(Z').
\]
It is worth spelling out its  definition  in geometric terms: a class in $H_P(Z)$ can be represented by an $n$-cycle $a$ on $Z$ with support in $Z\ssm Z'$. Then the $\mu_d$-invariant $n$-cycle $\sum_{g\in \mu_d} ga$  bounds in $Z$: $\sum_{g\in \mu_d} ga=\p c$ for some $(n+1)$-chain $c$ on $Z$. Then  $c$ cuts out on 
$Z'$ an  $(n-1)$-cycle which represents $r_{Z'}[a]$.

If we apply Lemma \ref {lemma:rX}  to $M=\tilde H^n(\mathring Z)$, we also have a natural map $(\mu_d)_{ab}\otimes H_P(Z')\to H^P(Z)_{\mu_d}$. With the isomorphism $\mu_d\cong \ZZ/d$ coming from
 $\pi_1(\CC^\times, 1)\cong \ZZ$,  we then  get a map 
\[
s_{Z'}: \ZZ/d\otimes H_P(Z')\to H^P(Z)_{\mu_d}.
\]
There is also a simple geometric description of $s_{Z'}$:  an $(n-1)$-cycle $b$ on $Z'$ which represents a  class of $H_P(Z')$ bounds in $Z$: $b=\p a$ for some $(n+1)$-chain $a$ on $Z$. Then $(1-g_0)a$ is an $n$-cycle on $Z$  (recall that $g_0$ is a generator of $\mu_d$) which represents $s_{Z'}[c]\in H^P(Z)_{\mu_d}$.

Now consider  the diagram 
\begin{equation}\label{eqn:basicdiagram}
\begin{tikzcd}
\ZZ/d\otimes H^P(Z') &  \ZZ/d\otimes H_P(Z')\arrow[l, "\ZZ/d\otimes\iota_{Z'}" ']\arrow[d, "s_{Z'}"]\\
H_P(Z)_{\mu_d} \arrow[u, "r_{Z'}"] \arrow[r, "(\iota_Z)_{\mu_d}"]& H^P(Z)_{\mu_d}
\end{tikzcd}
\end{equation}
Since this diagram exists universally over the  $\CC^\times$-bundle over $\mathring\PP_{n,d}$, the group  $\hat\pi_1(\mathring\PP_{n,d},[Z'])$ acts on it. The monodromy along a simple loop as above maps to Picard-Lefschetz transformation in the top and to  its generalized version in the bottom.

Recall that when  $n$ is even, the  top arrow  of diagram \eqref{eqn:basicdiagram} is an isomorphism and that when $n$ is  odd, the  bottom   arrow is.

\begin{theorem}\label{thm:main}
When $n$ is even, $r_{Z'}$ is onto and the  inverse of the top arrow renders the diagram \eqref{eqn:basicdiagram} commutative in the sense that
\[
\begin{tikzcd}
(\iota _Z)_{\mu_d}: H_P(Z)_{\mu_d}\arrow[r, "r_{Z'}", two heads] & \ZZ/d\otimes H^P(Z')\cong  \ZZ/d\otimes H_P(Z')\arrow[r] & H^P(Z)_{\mu_d}.
\end{tikzcd}
\]

When $n$ is odd, $s_{Z'}$ is onto and  the  inverse of the bottom horizontal  arrow renders the diagram commutative  in the sense that  
\[
\begin{tikzcd}
\ZZ/d\otimes\iota_{Z}: \ZZ/d\otimes H_{P}(Z')\arrow[r, two heads, "s_{Z'}"] & H^P(Z)_{\mu_d}\cong H_P(Z)_{\mu_d}\arrow[r ] &
\ZZ/d\otimes H^P(Z').
\end{tikzcd}
\]

In either case, the  kernel of the first surjection  is a subgroup of $\ZZ/d$.
\end{theorem}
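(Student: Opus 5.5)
The plan is to get all three assertions from two inputs: a chain-level identity in $\ZZ/d\otimes R_d$ that gives commutativity, and the vanishing cycles of Remark~\ref{rem:vanlattice} and Pham's generalized ones that give surjectivity. The bound on the kernel is then formal.

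\emph{Commutativity.} Write $g=g_0$ and $N=\eps^d(1)=\sum_k g^k$. Put $T:=\sum_{k=0}^{d-1}k\,g^k\in R_d$. A telescoping computation gives $(1-g)T=N-d$ in $R_d$ (up to replacing $g$ by $g^{-1}$ according to the sign convention for $g_0$), so $N\equiv (1-g)T$ modulo $d$.

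Take $n$ even and $a\in H_P(Z)$, represented by an $n$-cycle in $\mathring Z$. By the geometric description, $Na=\p c$, and $b:=c\cap Z'$ represents $r_{Z'}[a]$. Since $n$ is even, $H_P(Z')=H^P(Z')$, so $b$ is also the primitive $(n-1)$-cycle to which $s_{Z'}$ is applied. Choosing the $(n+1)$-chain with boundary $b$ to be (a piece of) $c$ itself, I would compare the two $n$-cycles $(1-g)c$ and $a$ in $H^P(Z)_{\mu_d}$, i.e.\ in $\tilde H^n(\mathring Z)$ modulo $I_d$ and the image of $H^P(Z')$. The identity $N\equiv(1-g)T$ should show that these differ by an element of $I_d\cdot(\cdot)+d(\cdot)$. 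Since $H^P(Z)_{\mu_d}$ is killed by $d$ (it is an $A_d/(g-1)=\ZZ/d$-module), this gives $(\iota_Z)_{\mu_d}[a]=s_{Z'}\iota_{Z'}^{-1}r_{Z'}[a]$. The odd case is the dual statement, obtained by running the same argument on $M=\tilde H^n(\mathring Z)$ as allowed by Lemma~\ref{lemma:rX}.

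I expect this step to be the main obstacle. The chains live on $Z$ and not on $\mathring Z$, and one must check that the $I_d$- and $d$-multiples really die in the coinvariants. If the direct chain argument gets messy, I would instead verify the identity after pairing with $H^P(Z')$ via $(\;|\;)$ and use the self-duality of the diagram in Lemma~\ref{lemma:hyperplanecut}.

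\emph{Surjectivity.} Use a path $\g$ as in Remark~\ref{rem:vanlattice}, and let $\hat\delta_\g$ be Pham's cycle, which generates $V_\g\cong A_d$. Locally, near the degeneration, the construction is the $A_{d-1}$ suspension of an ordinary double point. There the local Milnor lattice of $\mathring Z$ is computable, and I would check that $r_{Z'}[\hat\delta_\g]=\pm\delta_\g \bmod d$ and $s_{Z'}(\delta_\g)=\pm[\hat\delta_\g]$. This local computation is essentially the $n=0$ case of both maps, which is explicit via $\tilde H_0$ of $d$ points. The vanishing cycles $\delta_\g$ generate $H_P(Z')$, hence $\ZZ/d\otimes H^P(Z')$ when $n$ is even. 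This gives surjectivity of $r_{Z'}$. For odd $n$, the $\hat\delta_\g$ generate $H_P(Z)$ over $R_d$ (Lefschetz theory for $Z$ together with the Pham monodromy), hence they generate $H_P(Z)_{\mu_d}\cong H^P(Z)_{\mu_d}$. This gives surjectivity of $s_{Z'}$.

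\emph{Kernel.} For $n$ even, $\ker r_{Z'}\subset\ker(\iota_Z)_{\mu_d}$. Apply $\mu_d$-coinvariants to $0\to H_P(Z)\to H^P(Z)\to\ZZ/d\to0$. The kernel of $(\iota_Z)_{\mu_d}$ is then the image of $H_1(\mu_d;\ZZ/d)\cong\ZZ/d$, so it is cyclic of order dividing $d$, and so is its subgroup $\ker r_{Z'}$. For $n$ odd, $\ker s_{Z'}\subset\ker(\ZZ/d\otimes\iota_{Z'})$. Since $\iota_{Z'}$ is injective with cokernel $\ZZ/d$ (here $n-1$ is even), this kernel is $\mathrm{Tor}(\ZZ/d,\ZZ/d)\cong\ZZ/d$.

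As a consistency check, the rank identity \eqref{eqn:funny} makes $H_P(Z)$ have $\ZZ$-rank $(d-1)(p_{n-1}(d)+(-1)^n)$. So if it is free over $A_d$, the orders on both sides match with a kernel of order exactly $d$.
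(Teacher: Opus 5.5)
\textbf{The gap is in the commutativity step.} This step is the real content of the theorem, and the mechanism you sketch cannot deliver it.

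First, the sketch is dimensionally inconsistent. The chain $c$ with $\p c=Na$ is an $(n+1)$-chain. So $(1-g)c$ is an $(n+1)$-cycle, since its boundary is $(1-g)Na=0$; it is not an $n$-cycle that can be compared with $a$. Also, $b=c\cap Z'$ is not the boundary of any piece of $c$ ($\p c$ misses $Z'$), whereas $s_{Z'}$ needs an $n$-chain $e$ with $\p e=b$.

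Second, and more seriously, $N\equiv(1-g)T \pmod d$ is an identity in $R_d$ that holds in every $\ZZ\mu_d$-module. It carries no information about intersection forms, so no manipulation with it can relate $\iota_Z$ to $\iota_{Z'}$. What is missing is a geometric link between the intersection map $\iota_{\mathring Z}:\tilde H_n(\mathring Z)\to\tilde H^n(\mathring Z)$ (which factors through $\iota_Z$) and $\iota_{Z'}$.

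The paper gets this link from Pham, after reducing to the Fermat case. There $\tilde H_n(\mathring Z)\cong R_d^{(n+1)}\varphi_n$, with $g_0$ acting as $y_n$ and $\iota_{\mathring Z}$ given by multiplication with $1-y_n$. All four corners of \eqref{eqn:basicdiagram} then become cyclic modules given by explicit ideals. Commutativity reduces to following the generators, $\varphi_n\mapsto\varphi_{n-1}(1-y_{n-1})$ and $\varphi_{n-1}\mapsto\varphi_n(1-y_n)$.

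If you want a model-free argument, the natural tool is the variation isomorphism $\mathrm{var}:\tilde H^n(\mathring Z)\to\tilde H_n(\mathring Z)$ of the cone singularity. It satisfies $\iota_{\mathring Z}\circ\mathrm{var}=g_0-1$. Since the $d$-th power of the geometric monodromy is the boundary Dehn twist, it also satisfies $N\circ\mathrm{var}=\pm L\circ\iota_{Z'}\circ R$, with $L$ and $R$ the tube and residue maps from the proof of Lemma \ref{lemma:hyperplanecut}. Writing $a=\mathrm{var}(x)$, both composites in the theorem then follow at once. Your proposal contains neither ingredient.

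\textbf{Surjectivity.} For even $n$, your route is a legitimate alternative to the paper's use of Remark \ref{rem:compare}. It uses $r_{Z'}[\hat\delta_\gamma]=\pm\delta_\gamma$, which can be checked in the local $A_{d-1}$ model: the $\mu_d$-translates of a lift of $\hat\delta_\gamma$ to $\mathring Z$ sum to the tube over $\delta_\gamma$. It then uses that vanishing cycles generate $H_P(Z')=H^P(Z')$.

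For odd $n$, your justification fails. Lefschetz theory for $Z$ gives generation of $H_P(Z)$ by ordinary vanishing cycles of degenerations in $\PP_{n+1,d}$. It does not give generation by Pham's cycles $\hat\delta_\gamma$, which come from the $\mu_d$-equivariant family. Moreover, given $s_{Z'}(\delta_\gamma)=\pm\hat\delta_\gamma$, generation of $H_P(Z)_{\mu_d}$ by the $\hat\delta_\gamma$ is equivalent to the surjectivity you are trying to prove.

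The step is also unnecessary. By Lemma \ref{lemma:rX}, $H^P(Z)$ is the kernel of $\tilde H^n(\mathring Z)\to\tilde H^n(\mathring Z)_{\mu_d}$, that is, $H^P(Z)=(g_0-1)\tilde H^n(\mathring Z)$. So $s_{Z'}$ is onto for purely formal reasons.

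Your kernel argument is fine once commutativity is established.
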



It is not clear to us whether in both cases the second map is always  injective,  so that the middle term would be the image
of $(\iota _Z)_{\mu_d}$ resp.\  $\ZZ/d\otimes\iota_{Z}$. 
We shall  however see that this the case when 
$d$ is a prime $p$.  For then $A_p$ is the ring of integers of $\QQ(\mu_p)$ and hence a Dedekind ring. Since $H_P(Z)$ and $H^P(Z)$ are  torsion free, the map $\iota_Z: H_P(Z)\to H^P(Z)$ is an  $A_p$-linear
homomorphism of locally free $A_p$-modules. Their ranks must be  $p_n(p)/(p-1)=p_{n-1}(d)+(-1)^n$.
Since the passage to covariants amounts to applying $\FF_p\otimes_{A_p}$, the induced map
$H_P(Z)_{\mu_p}\to H^P(Z)_{\mu_p}$ is one between  $\FF_p$-vector spaces of the same dimension 
$p_{n-1}(d)+(-1)^n$. A comparison of dimensions then gives:

\begin{corollary}\label{cor:main}
Assume that $d$ is a prime $p$. For $n$ even, $H_P(Z'; \FF_p)\cong H^P(Z'; \FF_p)$ is naturally identified with the image of  
map $(\iota _Z)_{\mu_d}: H_P(Z)_{\mu_d}\to H^P(Z)_{\mu_d}$ and for $n$ odd, $H_P(Z)_{\mu_d}\cong H^P(Z)_{\mu_d}$ is naturally identified with the image of 
$\FF_p\otimes\iota_{Z}:  \FF_p\otimes H_{P}(Z')\to \FF_p\otimes H^{P}(Z')$.\hfill $\square$
\end{corollary}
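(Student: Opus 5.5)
The plan is to deduce the corollary from Theorem \ref{thm:main}. In each parity the relevant map factors, by the theorem, as a surjection followed by a second map. I will show that the second map is injective by checking that the image of the composite has the same $\FF_p$-dimension as its source. Two inputs are needed.

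\emph{Dimension of the coinvariants.} By Lemma \ref{lemma:rX}, $H_P(Z)$ and $H^P(Z)$ have no $\mu_p$-invariants. Hence $\eps^p(1)$ acts on them as zero, so they are torsion free $A_p$-modules, i.e.\ locally free. Their rank is $p_n(p)/(p-1)=p_{n-1}(p)+(-1)^n$ by \eqref{eqn:funny}. Since $A_p/(1-g_0)A_p\cong\FF_p$ and $(1-g_0)$ is a prime ideal of norm $p$, we get $M_{\mu_p}=\FF_p\otimes_{A_p}M$. For a locally free $M$ of rank $k$ this is an $\FF_p$-vector space of dimension $k$. So $H_P(Z)_{\mu_p}$ and $H^P(Z)_{\mu_p}$ both have dimension $p_{n-1}(p)+(-1)^n$.

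\emph{Cokernel of $\iota$ in even dimension.} For an even-dimensional hypersurface, $\iota$ is injective with cokernel $\ZZ/p$, generated by the class of the invariant element $\eta^m$. In the $\mu_p$-equivariant situation this cokernel is the trivial module $\FF_p$.

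\emph{Case $n$ even.} By the theorem, $(\iota_Z)_{\mu_p}$ is the composite of the surjection $r_{Z'}$ with $\FF_p\otimes H^P(Z')\cong\FF_p\otimes H_P(Z')\xrightarrow{s_{Z'}}H^P(Z)_{\mu_p}$. Hence the image of $(\iota_Z)_{\mu_p}$ equals the image of $s_{Z'}$.

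Apply the right exact functor $(\;)_{\mu_p}=\FF_p\otimes_{A_p}$ to $0\to H_P(Z)\to H^P(Z)\to\FF_p\to0$. This shows that $\coker((\iota_Z)_{\mu_p})\cong\FF_p$. So the image of $(\iota_Z)_{\mu_p}$ has dimension $p_{n-1}(p)+1-1=p_{n-1}(p)$.

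This equals $\dim\FF_p\otimes H_P(Z')=p_{n-1}(p)$, the dimension of the source of $s_{Z'}$. Therefore $s_{Z'}$ is injective, and it identifies $H_P(Z';\FF_p)\cong H^P(Z';\FF_p)$ with the image of $(\iota_Z)_{\mu_p}$.

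\emph{Case $n$ odd.} By the theorem, $\FF_p\otimes\iota_{Z'}$ is the composite of the surjection $s_{Z'}$ with $H^P(Z)_{\mu_p}\cong H_P(Z)_{\mu_p}\xrightarrow{r_{Z'}}\FF_p\otimes H^P(Z')$. Since $s_{Z'}$ is onto, the image of $\FF_p\otimes\iota_{Z'}$ equals the image of $r_{Z'}$.

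Now $n-1$ is even, so $\iota_{Z'}:H_P(Z')\to H^P(Z')$ is injective with cokernel $\ZZ/p$. Tensoring with $\FF_p$ gives a map between spaces of dimension $p_{n-1}(p)$ whose cokernel is $\FF_p$. Its image therefore has dimension $p_{n-1}(p)-1$.

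This equals $\dim H_P(Z)_{\mu_p}=p_{n-1}(p)-1$. So $r_{Z'}$ is injective onto that image, which gives the stated identification.

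The only point needing care is the first input: that $H_P(Z)$ really is an $A_p$-module and that its coinvariants equal $\FF_p\otimes_{A_p}$. This uses Lemma \ref{lemma:rX} (no invariants) together with the Dedekind property of $A_p=\ZZ[\zeta_p]$. The rest is a dimension count.
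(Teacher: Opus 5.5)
Your proposal is correct and is essentially the paper's argument. The paper likewise treats $H_P(Z)$ and $H^P(Z)$ as locally free $A_p$-modules of rank $p_{n-1}(p)+(-1)^n$, computes coinvariants as $\FF_p\otimes_{A_p}(\;)$, and concludes by comparing dimensions; you make explicit the step it leaves implicit, namely that the cokernel of $(\iota_Z)_{\mu_p}$ (for $n$ even) resp.\ of $\FF_p\otimes\iota_{Z'}$ (for $n$ odd) is one-dimensional, which is what forces $s_{Z'}$ resp.\ $r_{Z'}$ to be injective.

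One minor slip: the cokernel $\ZZ/p$ of $\iota_Z$ is not generated by the class of $\eta^m$, which is zero in $H^P(Z)$. It is identified with $\ZZ/p$ by pairing with the invariant class $\eta^m$. In any case the $\mu_p$-action on it is trivial simply because $\aut(\ZZ/p)$ has order $p-1$.
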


\begin{example}[Cubic hypersurfaces]
The ring $A_3$ is  the Eisenstein ring. So $H_P(Z)$ and $H^P(Z)$ are free
$A_3$-modules of rank $\half p_n(3)=\frac{1}{3}(2^{n+1}-(-1)^n)$. We get the for $n=1,2,3,4,5,6$ the 
values  $1,2,3,5,11,21,43$ respectively. 
Let us take a closer look  at the two cases  $n=3$ and $n=4$.

When $n=3$, the variety $Z'\subset \PP^3$ is a smooth cubic surface. The primitive  homology $H_P(Z')$ is the part of $H_2(Z')$ annihilated  by canonical class and as is well known, this   lattice is isomorphic with the dual of the root lattice $Q(E_6)$ of type $E_6$, or rather its opposite (the intersection pairing takes the value $-2$ on the roots). The discriminant group of $H_P(Z')$ is cyclic of order 3 with the canonical class providing a generator.  Hence this  intersection pairing becomes  degenerate if we reduce modulo 3 with its radical  being spanned  3 times the canonical class.   
So if we regard $s_{Z'}$  as a map 
\[
s_{Z'}: \FF_3\otimes Q(E_6)\to \FF_3\otimes_{A_3} H^3(Z), 
\]
then by  Corollary \ref{cor:main}, $s_{Z'}$ is an surjection which realizes the nondegenerate quotient of 
$\FF_3\otimes Q(E_6)$. 
This brings us in the context of an exercise in Bourbaki \cite{boubaki_Lie456} (exerc.\ 2 of
Ch.\ 6,  \S 4), which  leads us to the conclusion that the Weyl group $W(E_6)$ acts faithfully on $\FF_3\otimes_{A_3} H^3(Z)$.  In other words,  it is equivalent to impose a principal level 3 structure on $H^3(Z)$ as an $A_3$-module
and to impose a $W(E_6)$-marking of $Z'$. This answers a question asked by Beauville in his overview \cite{beauville} of work of Allcock-Carlson-Toledo \cite{act}  (coming after Prop.\ 8.4) and thus gives a more direct proof that the moduli space of marked cubic surfaces is also  ball quotient (endowed with an action of $W(E_6)$).

When   $n=4$, $Z'\subset \PP^4$ is a smooth cubic threefold and $Z$ is a cubic $4$-fold. Then the image of 
$\FF_3\otimes_{A_3} H_P(Z)\to  \FF_3\otimes_{A_3} H^P(Z)$ is a (hermitian) $\FF_3$-vector space of dimension 10. This vector space  is via $r_{Z'}$ identified with $H^3(Z'; \FF_3)$  and hence 
$H^3(Z'; \FF_3)$ inherits this unitary structure.
\end{example}

\begin{remark}\label{rem:incarnations}
Any divisor  $e$ of $d$  determines  an intermediate $\mu_e$-covering  $Z\to Z_e\to \PP^n$  with 
$Z_e$  the $\mu_{d/e}$-obit space of $Z$. 
It might well be that some version of our main theorem then still holds. 

The most classical case is that of a double cover  $Z_2\to \PP^2$ ramified a smooth plane curve $Z'$ of even degree $d=2e$ and hence of genus $g(Z')=(e-1)(e-2)$. Then the involution acts on the primitive homology $H_P(Z_2)$ as minus the identity so that
its group of co-invariants is simply $H_P(Z_2; \FF_2)$,  whose  dimension is easily determined to be $2g(Z')+1$. The intersection pairing on $H_P(Z_2; \FF_2)$ is degenerate with radical of dimension 1 and an argument as above should  give a natural identification of its nondegenerate quotient with $H_P(Z'; \FF_2)$. This may well be known.

The case when  $Z'$ is a quartic curve in $\PP^2$ was mentioned to me a very long time ago by Bert van Geemen: then  $Z_2$ is a del Pezzo surface of degree 2 and  $H_P(Z_2)$ is up to a sign change of the quadratic form isomorphic with the root lattice $Q(E_7)$ and hence $H_P(Z_2)_{\mu_2}\cong  Q(E_7)\otimes \FF_2$. The quadratic form then defines a map from $ Q(E_7)\otimes \FF_2$ to its dual whose image is via the  above procedure  identified with the 6-dimensional symplectic space $H_1(Z';\FF_2)$. 
Note that this reproduces another exercise in Bourbaki (\cite{boubaki_Lie456}, exerc.\ 3 of Ch.\ 6,  \S 4). 
(To make this connection van Geemen used the double tangents of the quartic interpreted on the genus 3 curve $Z'$ as odd theta characteristics and on $Z$ as `opposite' pairs of exceptional curves.)

Other possible generalizations may regard cyclic covers over  projective manifolds other than a projective space
such as the $\mu_{d/e}$-cover $Z\to Z_e$. Here are two examples which play a central role in Kond\=o's ball quotient description of the moduli spaces of curves of genus 3 \cite{kondo:g=3} and  4 \cite{kondo:g=4}.
They imply that these ball quotient descriptions also exist if we impose a level $2$ structure on a genus 3 curve  or a level 3 structure on a genus 4 curve.

For the genus 3 case, we take the above case: $(d,e)=(4,2)$ and $n=2$. We then get a K3 surface $Z$ of degree 4 appearing  as the double cover of the  degree $2$  del Pezzo surface $Z_2$ ramified along the genus 3  curve $Z'$.  If $H_2(Z/Z_2)$ denotes  the kernel of  $H_2(Z)\to H_2(Z_2)$, then  $H_2(Z/Z_2)$ is a module of the Gaussian integers $\ZZ[\sqrt{-1}]$. It  is free rank 7. The intersection pairing becomes degenerate on
$\FF_2\otimes_{\ZZ[\sqrt{-1}]} H_P(Z/Z_2)$ (with one-dimensional radical) and its nondegenerate quotient can be identified with $H_1(Z'; \FF_2)$.

For the genus 4 case we consider  a $\mu_3$-cover $Z\to \PP^1\times\PP^1$ ramified along a smooth curve $Z'$ of bidegree $(3,3)$. Such a $Z'$  is a general  curve of genus 4 and $Z$ is  a K3 surface of degree $6$. Then the kernel $H_2(Z/(\PP^1\times \PP^1))$ of $H_2(Z)\to H_2(\PP^1\times \PP^1)$ 
is a free $A_3$-module of rank 10 for which the intersection pairing on 
$\FF_3\otimes_{A_3} H_P(Z/\PP^1\times \PP^1)$ is degenerate  with a $2$-dimensional radical 
and whose  nondegenerate quotient can be identified with  $H_1(Z'; \FF_3)$.
\end{remark}

\section{Proof of Theorem \ref{thm:main}}
The statement is independent of the particular choice of $Z'$ and hence we may (and will) assume that
$Z'$ is a Fermat hypersurface, i.e., admits an equation $F=\sum^n_{i=0} z_i^d$. (The homotopy 
type of $\mathring Z$ is that of the join of $\mathring Z'$ and a principal (discrete) $\mu_d$-set and although the $\mu_d$-action on $\mathring Z$ is compatible with this homotopy equivalence, the $\mu_d$-action on $\mathring Z'$ is nontrivial  and the Fermat hypersurface has the advantage of making  this action explicit.)

So if $f: \CC^{n+1}\to \CC$ is the homogeneous function defined by $F$, then  $f^{-1}(0)$ 
is the affine cone over $Z'$ and $\mathring Z=Z\ssm Z'$ is identified with $f^{-1}(-1)$. 
Furthermore,    $\mu_d$ acts by scalar  multiplication in $\CC^{n+1}$. 
By duality, $H^n(Z,Z')\cong H_n(\mathring Z)=H_n(f^{-1}(-1))$ and according to  
Pham \cite{pham} (see also Milnor \cite{milnor}), we have an isomorphism of $\ZZ\mu_d$-modules 
\[
\tilde H_n(f^{-1}(1))\cong \underbrace{I_d\otimes_\ZZ\cdots\otimes_\ZZ  I_d}_{n+1}
\]
(with $n+1$ tensor factors; we shall write this as $I_d^{(n+1)}$ ), where 
$g\in \mu_d$ acts diagonally as $g\otimes\cdots \otimes g$.  Note that this implies the earlier assertion that
$H_n(\mathring Z)$ is free abelian of rank $(d-1)^{n+1}$.

We get from Lemma \ref{lemma:hyperplanecut} the  short exact sequence
\begin{gather}
0 \to H^P(Z')\to I_d^{(n+1)}\to H_P(Z)\to 0, \label{eqn:ses2}
\end{gather}
of  $\ZZ\mu_d$-modules with  $\mu_d$ acting  trivially on the (co)-homology of $Z'$.
\medskip

The generator $g_0$ of $\mu_d$  identifies the group ring $R_d$ with $\ZZ[x]/(x^d-1)$,  the augmentation ideal $I_d$ with the ideal generated by $x-1$.

Hence $I_d^{(n+1)}\subset R_d^{(n+1)}$ is the ideal in $\ZZ[x_0, \dots, x_n]$ (with $x_i^d=1$ for all $i$) generated by $(x_0-1)\cdots (x_n-1)$ and the generator $g\in {\mu_d}$ acts on $\ZZ[x_0, \dots, x_n]$  and on this ideal as multiplication with $x_0\cdots x_n$. This suggests to pass to the generators $y_k:=x_0x_1\cdots x_k$, $k=0, \dots, n$ (which then will still be subject to the relations $y_k^d=1$). 
Then  $R_d^{(n+1)}= R_d^{(n)}\otimes \ZZ[y_n]/(y_n^d-1)$.
Since   $x_k=y_k/y_{k-1}$ for $k=1, \dots , n$, the ideal $I_d^{(n+1)}\subset R_d^{(n+1)}$  is  generated by 
\[
\varphi_n:=(y_0-1) (y_1-y_0)\cdots (y_n-y_{n-1})\in R_d^{(n+1)}
\]
and  $g\in {\mu_d}$ acts  as multiplication with $y_n$. We denote the image of $1\in R_d^{(n+1)}$ under $\eps^d(1)=\sum_{k\in \ZZ/d} g^k$ by 
\[
\textstyle u_n:=\sum_{k\in \ZZ/d} y_n^k.
\]
Note that $R_d^{(n+1)}u_n$ and  $R_d^{(n+1)}(y_n-1)$ are each others annihilator. Multiplication with $u_n$ has in $R_d^{(n+1)}$ the effect of substituting $1$ for  $y_n$ and  so $R_d^{(n+1)}u_n=R_d^{(n)}u_n$. 
This ideal is also the group of $\mu_d$-invariants in $R_d^{(n+1)}$.

\begin{lemma}\label{lemma:inv}
The group of $\mu_d$-invariants in $I_d^{(n+1)}=R_d^{(n+1)} \varphi_{n}$ is the ideal 
$\big(R^{(n)}_d\varphi_{n-1} \cap  R^{(n)}_d(1-y_{n-1})\big)u_{n}$
in $R^{(n+1)}_d$. 
\end{lemma}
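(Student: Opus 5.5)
The plan is to compute the invariants directly, by going back from the variable $y_n$ to the original variable $x_n=y_n/y_{n-1}$. With $x_n$ the tensor structure of $I_d^{(n+1)}$ is visible, and the invariance condition becomes a recursion on coefficients.

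First I write $R_d^{(n+1)}=R_d^{(n)}[x_n]/(x_n^d-1)$. This is a free $R_d^{(n)}$-module with basis $1,x_n,\dots,x_n^{d-1}$, and $y_n=y_{n-1}x_n$. Put $K:=R_d^{(n)}\varphi_{n-1}=I_d^{(n)}$. Since $\varphi_n=\varphi_{n-1}(y_n-y_{n-1})=\varphi_{n-1}y_{n-1}(x_n-1)$ and $y_{n-1}$ is a unit, the ideal $I_d^{(n+1)}$ equals $K\cdot I_d$, with $I_d$ the augmentation ideal in the variable $x_n$.

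I claim that
\[
K\cdot I_d=\Big\{\textstyle\sum_{k\in\ZZ/d} j_k x_n^k \;:\; j_k\in K,\ \sum_k j_k=0\Big\}.
\]
The inclusion $\subseteq$ is clear, because the coefficients of $a(x_n-1)x_n^k$ with $a\in K$ lie in $K$ and sum to zero. The inclusion $\supseteq$ holds because such an element can be rewritten as $\sum_k j_k(x_n^k-1)$, and each $x_n^k-1$ lies in $I_d$. Here I use that the coefficients $j_k$ of an element of $R_d^{(n+1)}$ in the basis $x_n^k$ are uniquely determined, so nothing is lost by working with them.

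Next I impose invariance. The generator $g$ of $\mu_d$ acts as multiplication by $y_n=y_{n-1}x_n$. Comparing coefficients of $x_n^{k+1}$ in $y_{n-1}x_n\sum_k j_kx_n^k=\sum_k j_kx_n^k$ gives $j_{k+1}=y_{n-1}j_k$ for all $k\in\ZZ/d$. Hence $j_k=y_{n-1}^kj_0$, and the cyclic consistency condition $y_{n-1}^d j_0=j_0$ is automatic. The element is then
\[
\textstyle\sum_k y_{n-1}^k x_n^k\, j_0=j_0\sum_k y_n^k=j_0u_n .
\]
The remaining conditions on $j_0$ are $j_0\in K=R_d^{(n)}\varphi_{n-1}$ and $\sum_k y_{n-1}^k j_0=u_{n-1}j_0=0$. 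By the annihilator statement made just before the lemma (applied one level down, in $R_d^{(n)}$), the annihilator of $u_{n-1}$ is $R_d^{(n)}(y_{n-1}-1)$. So the invariants are exactly $\big(R^{(n)}_d\varphi_{n-1}\cap R^{(n)}_d(1-y_{n-1})\big)u_n$. Conversely, every such $j_0u_n$ has coefficients $j_k=y_{n-1}^kj_0\in K$ summing to $u_{n-1}j_0=0$, so it lies in $I_d^{(n+1)}$ and is invariant.

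The only step needing care is the description of $K\cdot I_d$ by coefficients, that is, knowing that $I_d^{(n+1)}\cap\big(\bigoplus_k R_d^{(n)}x_n^k\big)$ is cut out coordinatewise. This is what the uniqueness of coefficients in the free basis $x_n^k$ provides. The case $n=0$, with $R_d^{(0)}=\ZZ$ and $\varphi_{-1}=1$, should be checked separately along the same lines.
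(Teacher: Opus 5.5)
Your proof is correct and follows essentially the paper's route: expand over $R^{(n)}_d$ in powers of the last variable, impose invariance coefficientwise, and reduce to the fact that the annihilator of $u_{n-1}$ in $R^{(n)}_d$ is $R^{(n)}_d(1-y_{n-1})$. The difference is only bookkeeping, and it mainly changes where the condition $u_{n-1}j_0=0$ comes from. You use the basis $x_n^k$ and describe $I_d^{(n+1)}$ intrinsically as the elements whose coefficients lie in $R^{(n)}_d\varphi_{n-1}$ and sum to zero, so invariance is the homogeneous shift $j_{k+1}=y_{n-1}j_k$ and the condition arises from membership; the paper instead writes elements as $\varphi_n a$ in the basis $y_n^k$, gets an inhomogeneous recurrence, and finds the same condition as its closing condition at $k=d$ (your version also makes the reverse inclusion explicit).
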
 
\begin{proof}
The group of $\mu_d$-invariants in $I_d^{(n+1)}$ is $I_d^{(n+1)}\cap R_d^{(n+1)}u_n$.  To show that this intersection is as asserted, let  $\varphi_na$ be any element of $I_d^{(n+1)}$ and write $a$ as 
$\sum_{k\in \ZZ/d} a_k y_n^k$ with $a_k\in R_d^{(n)}$. Then 
\[
\textstyle \varphi_n a=\varphi_{n-1} (y_n-y_{n-1})\sum_{k\in \ZZ/d} a_k y_n^k=
\varphi_{n-1}\sum_{k\in\ZZ/d} (a_{k-1}-y_{n-1}a_k)y_n^k.
\]
This is ${\mu_d}$-invariant, i.e.,  of the form $\varphi_{n-1} a'u_n$ with $a'\in R_d^{(n)}$,  
 precisely when $\varphi_{n-1}(a_{k-1}-y_{n-1}a_k)=\varphi_{n-1} a'\in R_d^{(n)}$ for all $k\in \ZZ/d$. This means that 
\[
\varphi_{n-1} a_k= \varphi_{n-1} a'(y_{n-1}^{-1}+\cdots +y_{n-1}^{-k}) + \varphi_{n-1} a_0 y_{n-1}^{-k}
\]
Here $a_0$ can be arbitrary, but  $a'$ is subject to a condition, for  by taking  $k=d$, we see that 
 $\varphi_{n-1} a'(y_{n-1}^{-1}+\cdots +y_{n-1}^{-d})=\varphi_{n-1} a'u_{n-1}$  must vanish. This amounts to demanding that $\varphi_{n-1} a'$ is divisible by $1-y_{n-1}$.
\end{proof}
 
So Lemma \ref{lemma:rX}  then  gives us  an $R^{(n)}_d$-linear isomorphism
\begin{equation}\label{eqn:1}
H^P(Z')\cong R^{(n)}_d\varphi_{n-1}\cap R^{(n)}_d(1-y_{n-1})
\end{equation}
and  an $R_d^{(n+1)}$-linear isomorphism
\begin{equation}\label{eqn:2}
H_P(Z)\cong R_d^{(n+1)} \varphi_{n} /\big(R_d^{(n)}\varphi_{n-1}\cap R_d^{(n)}(1-y_{n-1}) \big)u_{n}.
\end{equation}

The following corollary corrects corollary 2.2  in our paper \cite{looijenga}.  This error  was brought to our attention by D.~Gvirtz (see section 2 and in particular Remark 2.2  in  \cite{gvirtz}).

\begin{corollary}\label{cor:compare}
The natural $R_d^{(n)}$-linear map $\iota_Z: H_P(Z)\to  H^P(Z)$ is identified with the map 
\begin{equation}\label{eqn:basic}
R_d^{(n+1)}\varphi_{n} /\big(R_d^{(n)}\varphi_{n-1}\cap R_d^{(n)}(1-y_{n-1})\big)u_{n}\to 
R^{(n+1)}_d\varphi_{n} \cap  R^{(n+1)}_d(1-y_{n}).
\end{equation}
induced by multiplication with $1-y_{n}$. In particular, this map is an isomorphism when $n$ is odd and  its kernel and cokernel are naturally isomorphic with $\ZZ/d$ when $n$  is even.
\end{corollary}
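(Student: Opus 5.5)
The plan is to first identify $\iota_{\mathring Z}$ explicitly in Pham's model, and then pass to the quotient and the sub-object given by Lemma \ref{lemma:hyperplanecut}. Via Pham's isomorphism $\tilde H_n(\mathring Z)\cong I_d^{(n+1)}=R_d^{(n+1)}\varphi_n$, the dual $\tilde H^n(\mathring Z)$ is $\mathrm{Hom}(I_d^{(n+1)},\ZZ)$. The trace form on $R_d^{(n+1)}$ (coefficient of $1$ after multiplying by the bar-conjugate) identifies this dual with a quotient of $R_d^{(n+1)}$.

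\textbf{Step 1: the intersection form on the Milnor fibre.} In the Sebastiani--Thom/Pham description, the intersection form on $I_d^{(n+1)}$ is the tensor product of the forms on the $0$-dimensional factors $I_d=\tilde H_0(\mu_d)$, up to the usual sign $(-1)^{n(n+1)/2}$. After the change of variables $y_k=x_0\cdots x_k$ this should make $\iota_{\mathring Z}$, up to sign and a unit, multiplication by $1-y_n$ (the variation/Seifert form of the factor $y_n$). I would verify this on the generator $\varphi_n$ by a direct computation for $n=0$, where $\mathring Z=\mu_d$ and everything is explicit, and then argue inductively on the join $\mathring Z\simeq \mathring Z'*\mu_d$.

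\textbf{Step 2: restrict and descend to the diagram of Lemma \ref{lemma:hyperplanecut}.} By that lemma, $\iota_Z$ is obtained from $\iota_{\mathring Z}$ by factoring through the quotient by $H^P(Z')$ on the left and landing in the kernel of $\tilde H^n(\mathring Z)\to H_P(Z')$ on the right.
\begin{itemize}
\item By Lemma \ref{lemma:rX} and Lemma \ref{lemma:inv}, the image of $H^P(Z')$ is $I_d^{(n+1)}\cap R_d^{(n+1)}u_n$, which is exactly the submodule divided out in \eqref{eqn:basic}. It is killed by $1-y_n$, since $(1-y_n)u_n=0$, so multiplication by $1-y_n$ descends to the quotient.
\item The image of multiplication by $1-y_n$ lies in $R^{(n+1)}_d\varphi_n\cap R^{(n+1)}_d(1-y_n)$. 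This intersection consists of the elements annihilated by $u_n$, i.e.\ by the norm. I will identify it with $H^P(Z)$, dually to \eqref{eqn:1}: by Lemma \ref{lemma:rX}, $H_P(Z')$ is the coinvariant quotient of $\tilde H^n(\mathring Z)$, and its kernel is the image of the augmentation ideal, which is the $(1-y_n)$-divisible part.
\end{itemize}
This gives the identification of $\iota_Z$ with \eqref{eqn:basic}.

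\textbf{Step 3: kernel and cokernel.} The kernel of multiplication by $1-y_n$ on $R_d^{(n+1)}\varphi_n$ is $R_d^{(n+1)}\varphi_n\cap R_d^{(n+1)}u_n$, because $u_n$ and $1-y_n$ generate mutual annihilators. Modulo the submodule divided out in \eqref{eqn:basic}, this leaves the kernel to be computed. The cokernel is
\[
\big(R^{(n+1)}_d\varphi_n\cap R^{(n+1)}_d(1-y_n)\big)/R^{(n+1)}_d\varphi_n(1-y_n).
\]
I will compute both by induction on $n$, writing elements as $\sum_k a_ky_n^k$ with $a_k\in R^{(n)}_d$ exactly as in the proof of Lemma \ref{lemma:inv}. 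That proof reduces everything to the divisibility of $\varphi_{n-1}a'$ by $1-y_{n-1}$, so the obstruction groups at level $n$ are expressed through those at level $n-1$, with $\ZZ/d$ arising from $\eps_d\eps^d=d$. The parity alternation should come from \eqref{eqn:funny}: at each step the roles of $\varphi$-divisibility and $(1-y)$-divisibility swap. The base case $n=0$ gives $\ZZ/d$, namely $\tilde H^0$ versus $\tilde H_0$ of $d$ points. As a consistency check, the result must agree with the known facts that $\iota_Z$ is an isomorphism for odd $n$ and has $\ZZ/d$ defects for even $n$.

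\textbf{Main obstacle.} The hardest step is Step 1: pinning down that the intersection pairing in Pham's basis is exactly multiplication by $1-y_n$, including signs and the correct half of the variation operator. I would try first to prove it on the generator via the variation map $\mathrm{var}=1-g$ and Pham's explicit cycles, then extend by $R_d^{(n+1)}$-equivariance. A secondary issue is the precise bookkeeping for the even-$n$ kernel, which should be handled by the induction in Step 3.
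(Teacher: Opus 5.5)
Your skeleton (Pham's model, divide out the invariants, multiply by $1-y_n$) is the paper's. However, the identification of the target in Step~2 does not work as argued.

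You take $H^P(Z)$ to be the kernel of the coinvariant map $\tilde H^n(\mathring Z)\to H_P(Z')$, i.e.\ $I_d\cdot\tilde H^n(\mathring Z)$. Work in a model where $\tilde H^n(\mathring Z)=R_d^{(n+1)}\varphi_n$, $g$ acts as $y_n$, and $\iota_{\mathring Z}$ is multiplication by $1-y_n$. There that submodule is the \emph{product} $R_d^{(n+1)}\varphi_n(1-y_n)$, which is exactly the image of $\iota_{\mathring Z}$. So $\iota_Z$ would be onto for every $n$.

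The intersection $R_d^{(n+1)}\varphi_n\cap R_d^{(n+1)}(1-y_n)$ is instead the kernel of the norm $u_n$. Intersection modulo product is the group you yourself write as the cokernel in Step~3. It equals $\hat H^{-1}(\mu_d,\tilde H^n(\mathring Z))$, which is $\ZZ/d$ for even $n$. Accordingly, the coinvariant half of Lemma~\ref{lemma:rX} you invoke holds only for odd $n$: for $n=0$, $Z'=\emptyset$ while $\tilde H^0(\mathring Z)_{\mu_d}=(A_d)_{\mu_d}\cong\ZZ/d$.

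What does work is your first sentence. The norm kills $H^P(Z)$, since it is torsion free without invariants (by the proof of Lemma~\ref{lemma:rX}). The norm is multiplication by $d$, hence injective, on the trivial torsion-free quotient $H_P(Z')$. So $H^P(Z)=\ker(u_n)=R_d^{(n+1)}\varphi_n\cap R_d^{(n+1)}(1-y_n)$, provided your model of $\tilde H^n(\mathring Z)$ is $\mu_d$-equivariant (the one via the variation isomorphism is). The paper reaches the intersection another way. It applies \eqref{eqn:1} one level up, to $Z$ as branch locus of the $\mu_d$-cover of $\PP^{n+1}$. That uses only the invariant half of Lemma~\ref{lemma:rX} together with Lemma~\ref{lemma:inv}.

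In Step~3 your own observations already settle the kernel. The kernel of multiplication by $1-y_n$ on $R_d^{(n+1)}\varphi_n$ is its module of $\mu_d$-invariants. By Lemma~\ref{lemma:inv} that is exactly what you divide out, so \eqref{eqn:basic} is injective for every $n$ (for $n=0$ it is multiplication by $1-y_0$ on $I_d$). There is no $\ZZ/d$ kernel for the induction to find. This matches Section~1, where $\iota_Z$ is injective with cokernel $\ZZ/d$ for even $n$; the kernel clause of the statement can only hold after reduction mod $d$.

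For the cokernel, an induction driven by \eqref{eqn:funny} cannot work, since that is a rank identity and does not see torsion. Instead, tensor $0\to I_d\to\ZZ\mu_d\to\ZZ\to 0$ with $I_d^{(n)}$ (the middle term is induced) and shift dimensions. This gives $\hat H^{-1}(\mu_d,I_d^{(n+1)})\cong\hat H^{-n-2}(\mu_d,\ZZ)$, which is $\ZZ/d$ for even $n$ and $0$ for odd $n$. The paper computes nothing here: it transports the facts of Section~1 through the identification, and Remark~\ref{rem:compare} reads off the ideal-theoretic consequences.

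Finally, in Step~1 the intersection form of a join is not the tensor product of the $0$-dimensional intersection forms; for $d=2$, $n=1$ that would give self-intersection $4$ instead of $0$. The multiplicative object is the variation. One has $\mathrm{Var}\circ\iota_{\mathring Z}=h-1$ with monodromy $h=y_n$, which yields multiplication by $\pm(1-y_n)$ once $\tilde H^n(\mathring Z)$ is transported by $\mathrm{Var}$. Your fallback via the variation is the right one; the paper simply quotes this from Pham, \S 3.
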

\begin{proof}
The identifications of $H_P(Z)$ and $H^P(Z)$ are provided by the isomorphisms \eqref{eqn:1} and 
\eqref{eqn:2}  (where in the last  case we replace $n-1$ by $n$).
The natural map $\tilde H_n(\tilde Z)\to \tilde H^n(\tilde Z)$ is given by 
multiplication with $1-y_{n}$ (see Pham \cite{pham} \S 3) and so this induces also the map $H_P(Z)\to H^P(Z)$ (note that the map \eqref{eqn:basic} is well-defined as this kills $u_{n}$).
\end{proof}

\begin{remark}\label{rem:compare}
So Corollary \ref{cor:compare} tells us that for odd $n$, the ideals 
$R^{(n+1)}_d\varphi_{n}$ and $R^{(n+1)}_d(1-y_{n})$ are relatively prime in the sense that their intersection is their product $R_d^{(n+1)}\varphi_{n}(1-y_{n})$. It also tells us that  the annihilator of  $1-y_{n}$ in $R^{(n+1)}_d\varphi_{n}$ (which is $R^{(n+1)}_d\varphi_{n}\cap  R^{(n+1)}_d u_n$) is then equal to $\big(R_d^{(n)}\varphi_{n-1}\cap R_d^{(n)}(1-y_{n-1})\big)u_{n}$
\end{remark}

\begin{proof}[Proof of Theorem \ref{thm:main}]
We do this by writing the diagram  \eqref{eqn:basicdiagram} out in the above terms. It
is induced by the diagram  of $R^{(n)}_d$-modules in which the sources (right top and left bottom) 
represent $H_P(Z')$ and $H_P(Z)$ respectively

\begin{large}
\[
\begin{tikzcd}
\frac{R^{(n)}_{d}\varphi_{n-1}\cap R^{(n)}_{d}(1-y_{n-1})}{d(R^{(n)}_{d}\varphi_{n-1}\cap R^{(n)}_{d}(1-y_{n-1}))}
 &  \frac{R^{(n)}_{d}\varphi_{n-1}}{\big(R^{(n-1)}_{d}\varphi_{n-2}+R^{(n-1)}_{d}(1-y_{n-2})\big)u_{n-1}} \arrow[l, ".(1-y_{n-1})" ']\arrow[d, ".(y_{n}-y_{n-1})(1-y_n)"]\\
\frac{R^{(n+1)}_{d}\varphi_{n}}{\big(R^{(n)}_{d}\varphi_{n-1}\cap R^{(n)}_{d}(1-y_{n-1})\big)u_{n}}\arrow[u, "y_n=1"] \arrow[r, ".(1-y_n)"]& \frac{R^{(n+1)}_{d}\varphi_{n}\cap R^{(n+1)}_{d}(1-y_n)}{\big(R^{(n+1)}_{d}\varphi_{n}\cap R^{(n+1)}_{d}(1-y_n)\big)(1-y_n)}
\end{tikzcd}
\]
\end{large}
The identity $u_{n-1}(1-y_{n-1})=0$  resp.\  $u_n(1-y_n)=0$ ensures that the bottom and top horizontal maps are well-defined. The map on the left is well-defined because $y_n=1$ takes $u_n$ to $d$ and the map on the right is well-defined because $u_{n-1}(y_n-y_{n-1})=u_{n-1}(1-y_{n-1})$. We then observe that in this diagram 
\[
\begin{tikzcd}
\varphi_{n-1}(1-y_{n-1})&  \varphi_{n-1}\arrow[l, maps to, ".(1-y_{n-1})" ']\arrow[d, maps to, ".(y_{n-1}-y_{n-2})(1-y_n)"]\\
\varphi_n\arrow[u, maps to, "y_n=1"] \arrow[r, maps to, ".(1-y_n)"]& \varphi_n(1-y_n)
\end{tikzcd}
\]
This gives us, combined with Remark \ref{rem:compare}, all the asserted properties. 
\end{proof}


\end{document}